\newcommand{\Et}{\mathbb{E}^3}
\newcommand{\Ed}{\mathbb{E}^{d}}
\newcommand{\inte}{\operatorname*{int}}
\newcommand{\aff}{\operatorname*{aff}}
\newcommand{\bd}{\operatorname*{bd}}
\newtheorem{proposition}{Proposition}
\newtheorem{lemma}{Lemma}
\newtheorem{theorem}{Theorem}
\newtheorem{remark}{Remark}
\newtheorem{definition}{Definition}
\newtheorem{conjecture}{Conjecture}
\title[Generalized bodies of revolution]{Characterizations of generalized convex bodies of revolution}
\begin{document}

 \author[Alfonseca]{
M. Angeles Alfonseca}
\address{North Dakota State University, USA} \email{maria.alfonseca@ndsu.edu} 

\author[Cordier]{M. Cordier}    
\address{Chatham University, USA}
\email{m.doyle@chatham.edu}

\author[Morales]{E. Morales-Amaya}
\address{Facultad de Matem\'aticas-Acapulco\\
Universidad Aut\'onoma de Guerrero, Mexico}
\email{emoralesamaya@gmail.com}

\author[Verdusco]{D. J. Verdusco Hern\'andez}
\address{Facultad de Matem\'aticas-Acapulco\\
Universidad Aut\'onoma de Guerrero, Mexico}
\email{diana.janett.h@gmail.com}

 \thanks{The first author is supported in part by the Simons Foundation gift MPS-TSM-00711907. The second author is supported in part by the AMS-Simons grant ASPUI-24-CRMCXU. SNI-Conahcyt of Mexico, grant 21120, supports the third author.}

 \subjclass{52A20}

\begin{abstract}
In this work we prove that either a sequence of axes of symmetry or a sequence of hyperplanes of symmetry of a convex body $K$ in the Euclidean space $\Ed, d\geq 3$, are enough to guarantee that $K$ is a generalized body of revolution (and in some cases a sphere).
\end{abstract}

\maketitle

\section{Introduction}

Let $K\subset \Ed$, $d\geq 3$, be a convex body. In dimension 3, an \textit{axis of symmetry} of $K$ is a line $L$ such that $K$ remains invariant after a rotation by the angle $\pi$ around $L$. Naturally, the notion axis of symmetry of a convex body is easily generalizable to higher dimensions (see Section \ref{defs}).

If the body $K \subset \Ed$ satisfies one of the following two properties:  
\begin{enumerate}[(A)] 
\item all the lines $L$ passing through a point $p$ are axes of symmetry of $K$, 
\item all the hyperplanes passing through a point $p$ are hyperplanes of symmetry of $K$, 
\end{enumerate}
then $K$ is a sphere with center at $p$. Indeed, property (A) implies that all the sections of $K$ are centrally symmetric, and it is well known that in this case $K$ is an ellipsoid (see for example \cite{bg,la,mm1}). On the other hand, the only ellipsoid such that all lines through a given point are axes of symmetry is the sphere \cite{Au}.
As for property (B), if $K$ is symmetric with respect to a hyperplane $H$ passing through $p$, then for all segments contained in $K$ perpendicular to $H$, their midpoints lie on $H$. By Brunn's theorem \cite{br} (see also \cite[pg 44]{mmo}), $K$ is an ellipsoid. However, the only ellipsoid such that all hyperplanes  through a given point are hyperplanes of symmetry is a sphere \cite{AM}.

It is natural to try to reduce the quantity of axes of symmetry or hyperplanes of symmetry needed to guarantee that a convex body $K\subset \Ed$ is a body of revolution (in some cases, a sphere).  For example,  it was proven in \cite{jemomo} that if there exists a plane $H$ in $\Et$ containing a point $p$ such that every line $L\subset H$ through $p$ is an axis of symmetry of $K$, then $K$ is a body of revolution.

In this work we generalize the aforesaid result to general dimension $d$, when $K$ has enough $k$-axes of symmetry, obtaining that $K$ is a $k$-body of revolution (see Section \ref{defs}, Definitions \ref{kaxes} and \ref{chupadota}.) Our first two results are as follows.

\begin{theorem}\label{grandota}
Let $K\subset \Ed$ be a convex body, $d\geq 3$, and let $k$ be a positive integer, $2\leq k\leq d-1$. Assume that there exist a $k$-flat $\Lambda$ and a point $p\in \Lambda$,  such that every line $L$ passing through $p$ and contained in $\Lambda$ is an axis of symmetry of $K$. Then $K$ is a $k$-body of revolution.
\end{theorem}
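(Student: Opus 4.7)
The plan is to translate $p$ to the origin so that $\Lambda$ becomes a $k$-dimensional linear subspace of $\Ed$, put $W:=\Lambda$ and $V:=W^{\perp}$, and argue that the orthogonal decomposition $\Ed=V\oplus W$ realises the pair $\E^{d-k}\oplus \E^{k}$ required by Definition \ref{chupadota}. Note that for every line $L\subset W$ through the origin one has $V\subset L^{\perp}$, so $R_L$ acts as $\id$ on $L$, as $-\id$ on $L^{\perp}\cap W$, and as $-\id$ on $V$.

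The key step is to exploit pairs of axes of symmetry. For two distinct lines $L_1,L_2\subset W$ through the origin, the composition $T:=R_{L_1}R_{L_2}$ is a symmetry of $K$. Since $R_{L_i}|_V=-\id$, we have $T|_V=\id$. Decomposing $W=\Pi\oplus(\Pi^{\perp}\cap W)$ with $\Pi:=L_1+L_2$, both $R_{L_1}$ and $R_{L_2}$ act as $-\id$ on $\Pi^{\perp}\cap W$, so $T|_{\Pi^{\perp}\cap W}=\id$; a short $2\times 2$ matrix computation in $\Pi$ identifies $T|_{\Pi}$ as the planar rotation by twice the angle between $L_1$ and $L_2$. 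Letting $L_1,L_2$ vary over all line-pairs in $W$ through the origin, the resulting rotations cover every angle in every 2-plane of $W$, and hence generate a subgroup $G\leq O(d)$ that acts as the identity on $V$ and as the full $SO(k)$ on $W$; all of $G$ preserves $K$.

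The geometric conclusion is then immediate. Fix any $k$-flat $\Gamma$ parallel to $W$ and write $\Gamma=v+W$ with $v\in V$. Since $G|_V=\id$, the group $G$ fixes $v$, preserves $\Gamma$, and acts on $\Gamma$ as the full group of rotations about $v$. Because $G$ also preserves $K$, the convex slice $K\cap\Gamma$ is invariant under every rotation of $\Gamma$ about $v$; by a standard convexity argument this forces $K\cap\Gamma$ to be a (possibly degenerate) Euclidean ball in $\Gamma$ centered at $v$. Hence $\bd K\cap\Gamma$ is a $(k-1)$-sphere with center lying in $V=\E^{d-k}$, which is exactly what Definition \ref{chupadota} requires.

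The main obstacle is the linear-algebra bookkeeping in the second paragraph. One must remember that $R_L$ is the involution $\id_L\oplus(-\id_{L^{\perp}})$ rather than a hyperplane reflection: the minus sign on the full codimension-one subspace $L^{\perp}$ is exactly what causes the $V$-contributions of $R_{L_1}$ and $R_{L_2}$ to cancel and yield $T|_V=\id$, and it is also what makes the computation in $\Pi$ produce an honest $SO(2)$-rotation instead of an orientation-reversing map. Once this sign tracking and the standard fact that 2-plane rotations generate $SO(k)$ are in place, the rest is essentially formal.
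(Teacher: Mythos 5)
Your proof is correct, but it takes a genuinely different and more elementary route than the paper. The paper first shows $K$ is centrally symmetric, then proves the shadow-boundary inclusion $L^{\perp}\cap\bd K\subset S\partial(K,L)$ for each axis $L\subset\Lambda$, feeds this into Proposition \ref{cabezon} (a higher-dimensional version of the Montejano--Morales ellipsoid characterization, itself proved by induction on dimension) to conclude that the sections parallel to $\Lambda$ are ellipsoids, and finally uses the orthogonality of all the shadow-boundary data to upgrade ellipsoid to sphere, reducing the case $2\le k\le d-2$ to $k=d-1$ inside $(k+1)$-flats. You instead work purely with the symmetry group: since $R_L=\id_L\oplus(-\id_{L^{\perp}})$, the product of two such involutions with axes in $W=\Lambda-p$ is the identity on $V=W^{\perp}$ and on $\Pi^{\perp}\cap W$, and a rotation by $2\theta$ on $\Pi=L_1+L_2$; these planar rotations generate $SO(k)\times\{\id_V\}$, so every $k$-section $v+W$ is $SO(k)$-invariant about $v$ and, being compact convex with $k\ge 2$, is the ball spanned by its farthest orbit. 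Your sign-tracking is right, and the argument correctly needs $k\ge 2$ (two distinct lines) exactly where the theorem assumes it. What each approach buys: yours is shorter, self-contained, and bypasses Proposition \ref{cabezon} and its induction entirely; the paper's route exercises the shadow-boundary and ellipsoid machinery that it reuses elsewhere (e.g., in Theorem \ref{copaoro}, where the symmetries are not full isometries of $K$ and a group argument is unavailable). Two small points worth making explicit if you write this up: state the standard fact that a compact convex $SO(k)$-invariant set is the convex hull of its outermost orbit (hence a ball centered at the fixed point $v\in V$), and note that for flats meeting $\inte K$ the set $\bd K\cap\Gamma$ coincides with the relative boundary of $K\cap\Gamma$, which is the $(k-1)$-sphere the definition asks for.
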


%{\color{blue} Ask Bartek if thm 1 relates to our theorems - if not, explain the difference }

\begin{theorem}\label{copaoro}
Let $K\subset \Ed$ be a  convex body, $d\geq 3$, and let $k$ be an integer $1\leq k \leq d-2$. Assume that there exists a $(k+1)$-flat $\Lambda$ and a point $p\in \Lambda$, such that every $k$-flat $\Gamma \subset \Lambda$, $p\in \Gamma$, is a $k$-axis of symmetry of $K$. Then $K$ is a  $(k+1)$-body of revolution.
\end{theorem}

 We also show either a sequence of axes of symmetry or a sequence of hyperplanes of symmetry are enough to guarantee that $K$ is either a $k$-body of revolution or a sphere. 

\begin{theorem}\label{dream}
Let $K\subset \Ed$ be a convex body, $d\geq 3$. Suppose that there exists a  sequence $\{L_n\}\subset \Ed $ of axes of symmetry of $K$ such that:
\begin{itemize}
\item [(i)]  The sequence contains infinitely many distinct lines.
\item [(ii)] $\dim(\aff\{L_1,L_2,\ldots\}) =k$, $2\leq k \leq d$,
\item [(iii)]  If $i\not=j$, $L_i$ is not perpendicular to $L_j$.
\end{itemize}
Then %, if $k\not= d$, 
$K$ is a $k$-body of revolution. Moreover, if $k=d$, then $K$ is a sphere.
 \end{theorem}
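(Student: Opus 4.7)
The plan is to show that every line through a common point in the $k$-dimensional linear subspace $V:=\aff\{L_n\}$ is an axis of symmetry of $K$. Then for $k<d$ Theorem~\ref{grandota} concludes that $K$ is a $k$-body of revolution, and for $k=d$ the resulting $SO(d)$-invariance of $K$ forces $K$ to be a ball.

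Every half-turn $R_{L_n}$ fixes the centroid of $K$ and has $L_n$ as its fixed-point set, so each $L_n$ passes through this centroid; I place it at the origin, whence $V$ is a linear subspace and each $R_{L_n}$ lies in the closed Lie subgroup $G_K:=\{g\in O(d):g(K)=K\}$. The set of axes through $0$ is closed in the compact Grassmannian of lines, so $\{L_n\}$ has a convergent subsequence $L_{n_j}\to L^{*}$ with $L_{n_j}\neq L^{*}$, and $L^{*}\in V$ is also an axis. Composing two half-turns about concurrent lines meeting at angle $\alpha$ produces the rotation by $2\alpha$ in the plane they span (identity on its orthogonal complement), so $R_{L_{n_j}}R_{L^{*}}\in G_K$ is the rotation by $2\alpha_j$ in $P_j:=\mathrm{span}(L_{n_j},L^{*})\subset V$, with $\alpha_j\to 0$. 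Passing to a subsequence $P_j\to P$ and, for any $\theta\in\mathbb{R}$, choosing integers $m_j$ with $2m_j\alpha_j\to\theta$, the elements $(R_{L_{n_j}}R_{L^{*}})^{m_j}\in G_K$ converge in $O(d)$ to the rotation by $\theta$ in $P$; by closedness $SO(P)\subset G_K$, and every line of $P$ through $0$ is an axis of symmetry.

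Next I would grow $P$ to $V$ by induction: set $V_0:=P$, and assume $SO(V_i)\subset G_K$ for a subspace $V_i\subsetneq V$ of dimension $i+2$. I would choose $L_{n(i)}\in\{L_n\}$ with $L_{n(i)}\not\subset V_i$ and $L_{n(i)}\not\perp V_i$; such an axis exists, for otherwise all $L_n$ would lie in $V_i\cup(V_i^{\perp}\cap V)$, and hypothesis~(ii) together with $\mathrm{span}\{L_n\}=V$ is incompatible with any such distribution (any pair drawn from the two summands would be perpendicular, and lying entirely in either summand contradicts spanning $V$). Put $V_{i+1}:=V_i+\mathrm{span}(L_{n(i)})$. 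By transitivity of $SO(V_i)$ on $\mathbb{P}(V_i)$ applied to $L^{*}$, every line $L\subset V_i$ is an axis, so $R_LR_{L_{n(i)}}\in G_K$ is the rotation in $\mathrm{span}(L,L_{n(i)})\subset V_{i+1}$ by twice the angle $\theta(L,L_{n(i)})$; as $L$ varies in $\mathbb{P}(V_i)$, $\theta$ varies continuously over a nondegenerate interval bounded away from $0$, so for a dense set of $L$ the ratio $\theta/\pi$ is irrational; iteration then gives $SO(\mathrm{span}(L,L_{n(i)}))\subset G_K$ for those $L$, and by closedness and continuity for every $L\subset V_i$. Fixing an orthonormal basis $e_1,\dots,e_{i+2}$ of $V_i$ together with a unit vector $e_{i+3}\in V_{i+1}\cap V_i^{\perp}$, write a unit vector along $L_{n(i)}$ as $f=\sum_{a=1}^{i+2}c_ae_a+c_{i+3}e_{i+3}$ with $c_{i+3}\neq 0$. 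The Lie algebra $\mathfrak{g}$ of $G_K$ contains the rotation generator of $\mathrm{span}(e_a,f)$, proportional to $e_af^T-fe_a^T$; its projection onto $\mathfrak{so}(V_i)$ already lies in $\mathfrak{g}$, hence the rank-two generator $e_ae_{i+3}^T-e_{i+3}e_a^T$ also lies in $\mathfrak{g}$ for each $a\in\{1,\dots,i+2\}$. Together with $\mathfrak{so}(V_i)$, these elements span $\mathfrak{so}(V_{i+1})$, whence $SO(V_{i+1})\subset G_K$, and the induction terminates at $V_{k-2}=V$.

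The main obstacle will be this bootstrap induction: producing the full Lie algebra $\mathfrak{so}(V_{i+1})$ at each stage requires both the density--closure argument for rotations with irrational angles, and the careful selection of a new axis $L_{n(i)}$ whose existence relies on hypothesis~(ii). Once $SO(V)\subset G_K$ is in hand, every line of $V$ through $0$ is an axis of symmetry of $K$, and the conclusion follows from Theorem~\ref{grandota} applied with $\Lambda=V$ when $k<d$, or from the transitivity of $SO(d)$ on $S^{d-1}$ when $k=d$.
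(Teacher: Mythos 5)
Your proposal is correct, and it reaches the conclusion by the same overall skeleton as the paper --- fix the common point of all the axes, produce one $2$-plane $P\subset V$ every line of which through the origin is an axis, then adjoin one non-perpendicular axis $L_{n(i)}\not\subset V_i$ at a time (this is where hypothesis (ii) enters, exactly as in the paper) until the plane has grown to $V$, and finish with Theorem \ref{grandota} (or sphericity when $k=d$). The machinery, however, is genuinely different. The paper stays entirely at the level of sets of lines: concurrency comes from the circumcenter (Lemma \ref{concurrentes}), the plane $P$ comes from a case analysis (an infinite subsequence in a plane versus infinitely many distinct planes $\aff\{L,L_n\}$) combined with the star construction $\Sigma(L_1,L_2)$ and the fact that a limit of axes is an axis (Lemma \ref{contento}), and the inductive step is Lemma \ref{kiara}. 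You instead work in the compact group $G_K=\{g\in O(d):g(K)=K\}$: concurrency via the centroid, the plane $P$ via convergent powers of products of two half-turns (which removes the paper's case split --- you only need one accumulation point $L^{*}$ and the convergent planes $P_j$), and the inductive step via generation of $\mathfrak{so}(V_{i+1})$ in the Lie algebra of $G_K$. The common analytic engine is identical in both treatments (the product of half-turns about concurrent lines at angle $\alpha$ is the rotation by $2\alpha$ in their span; irrationality of $\alpha/\pi$ on a dense set plus closedness under limits yields a continuum of symmetries), so nothing is gained in depth, but your version is cleaner and self-contained, bypassing Lemmas \ref{concurrentes}, \ref{contento} and \ref{kiara} and Remark \ref{escandalo}. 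One caveat you share with the paper: extracting $L_{n_j}\to L^{*}$ with $L_{n_j}\neq L^{*}$ (respectively, the paper's claim $f(n_i)\to\infty$) needs the $L_n$ to contain infinitely many distinct lines; this is implicit in the statement and is genuinely necessary, since a triangular prism has three pairwise non-perpendicular coplanar axes of symmetry without being a $2$-body of revolution.
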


\begin{theorem}\label{fantasia}
Let $K\subset \Ed$ be a convex body, $d\geq 3$, and let $\{\Pi_i\}$ be a sequence of distinct hyperplanes in 
$\Ed$. Assume that for $i=1,2,\ldots$, $\Pi_i$ is a hyperplane of symmetry of $K$, and there exists an integer $k$, $1\leq k \leq d-2$, and a $k$-flat $\Gamma$ such that 
\begin{eqnarray}\label{miami}
\Gamma \subset \Pi_i  \textrm{ }\textrm{ } \textrm{for} \textrm{ } i=1,2,\ldots,
\end{eqnarray}
 and $k$ is maximal with this property.  %(i.e., if the integer $l$ is bigger than $k$, then does not exist a $l$-flat $\Lambda$ and an infinite subsequence $\{\Pi_{i_j}\}$ of $\{\Pi_{i}\}$ such that  $\Lambda  \subset \Pi_{i_j}  $ for  $i=1,2,...$). 
 Then $K$ is a $(d-k)$-body of revolution.
\end{theorem}

In Theorem \ref{fantasia} we assume that all the hyperplanes $\Pi_i$ contain the same subspace $\Gamma$. In a  related result  obtained in \cite[Lemma 18]{barthe} in connection to  Mahler's conjecture,  the authors explicitly avoid the case where the family of hyperplanes of symmetry of the convex body share a subspace (this is the irreducibility hypothesis in their Lemma), and conclude that $K$ must be a sphere. Thus, combining their result with our Theorem \ref{fantasia}, we have a  complete  characterization of convex bodies in terms of a sequence of hyperplanes of symmetry. 

In our last result, instead of axes of symmetry we consider $k$-axes of rotation. 

\begin{theorem}\label{brasil}
Let $K\subset \Ed$ be a convex body, $d\geq 3$, let $p\in \Ed$ be a point and let $k$ a positive integer, $k\geq 3$. Su\-ppo\-se that every $(d-2)$-flat passing through $p$ is a $(d-2)$-axis of rotation of order 
$k$ of $K$, then $K$ is a sphere with center at $p$.
\end{theorem}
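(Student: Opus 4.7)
The plan is to exploit the richness of the rotation group provided by the hypothesis and then invoke the S\"uss--Schneider characterization of the ball by concurrent congruent central sections, as the introduction of the paper already signals. Setting $p$ at the origin, for each $(d-2)$-flat $\Gamma$ through $p$ the hypothesis supplies a rotation $R_{\Gamma}\in SO(d)$ of order $k$ with $R_{\Gamma}(K)=K$. Let $G\subset SO(d)$ be the subgroup they generate and $\overline{G}$ its closure in $O(d)$. Since $K$ is compact, $g(K)=K$ for every $g\in\overline{G}$.

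The key claim is that $\overline{G}$ acts transitively on the set of hyperplanes through $p$. Granted this, for any two such hyperplanes $\Pi_1,\Pi_2$ we pick $g\in\overline{G}$ with $g(\Pi_1)=\Pi_2$; then $g(K\cap\Pi_1)=K\cap\Pi_2$, so the corresponding central sections are congruent. The S\"uss--Schneider characterization of the Euclidean ball by concurrent congruent hyperplane sections then forces $K$ to be a ball centred at $p$, which is the desired sphere.

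To establish the transitivity claim in dimension three I would invoke the classification of closed subgroups of $SO(3)$: every proper closed subgroup is either finite, or isomorphic to $SO(2)$, or to the normalizer $O(2)$ of $SO(2)$, and none of these admits order-$k$ (with $k\geq 3$) rotations around more than one axis. Since $\overline{G}$ contains such rotations about uncountably many axes through $p$, the only remaining possibility is $\overline{G}=SO(3)$, which is transitive on planes through $p$. For arbitrary $d$ an analogous conclusion follows from a Lie-theoretic or orbit-closure analysis of $\overline{G}$: the smooth dependence of $R_{\Gamma}$ on $\Gamma$ lets one differentiate along smooth deformations $\{\Gamma_t\}$ of $(d-2)$-flats through $p$, i.e.\ consider $t\mapsto R_{\Gamma_t}R_{\Gamma_0}^{-1}$ at $t=0$, producing infinitesimal rotations in every 2-plane through $p$; these fill out $\mathfrak{so}(d)$ and force $\overline{G}=SO(d)$.

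The main obstacle is precisely this transitivity step in higher dimensions. Dimension three drops out immediately from the classification of closed subgroups of $SO(3)$, but for $d\geq 4$ the list of closed subgroups of $SO(d)$ acting transitively on the sphere is non-trivial, so one must carefully exhibit enough elements of $\overline{G}$, either via the Lie-algebra differentiation sketched above or by a direct orbit-closure argument, to rule out every such proper subgroup before the S\"uss--Schneider step can be invoked.
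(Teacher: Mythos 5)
Your endgame is the same as the paper's: reduce the theorem to the statement that all hyperplane sections of $K$ through $p$ are mutually congruent and then invoke the S\"uss--Schneider characterization of the ball. The difference lies in how that congruence is established, and this is where your proposal has a genuine gap. The paper works directly on the sphere of unit normals: rotating a fixed hyperplane $\Pi$ about the various $(d-2)$-flats $L\subset\Pi$ through $p$ by the angle $2\pi/k$ shows that the set $\Delta$ of normals of sections congruent to $\Pi\cap K$ contains, together with each $v$, the entire $(d-2)$-sphere $\tau_v$ of directions at angular distance $2\pi/k$ from $v$; iterating and covering $\mathbb{S}^{d-1}$ by such chains gives $\Delta=\mathbb{S}^{d-1}$ with no group theory and no case distinction on $d$. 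Your route through the closure $\overline{G}$ of the generated group does work in dimension three, although your supporting claim is literally false: the tetrahedral, octahedral and icosahedral subgroups of $SO(3)$ all contain order-$3$ rotations about several distinct axes. What actually saves the argument there is that $\overline{G}$ contains order-$k$ rotations ($k\geq 3$) about uncountably many axes, which no finite group and neither $SO(2)$ nor $O(2)$ can accommodate. For $d\geq 4$, however, you explicitly leave the transitivity step unresolved, and the Lie-algebra sketch as written does not close it: the derivative of $t\mapsto R_{\Gamma_t}R_{\Gamma_0}^{-1}$ at $t=0$ is the image of the varying generator under the differential of the exponential map at a non-identity point, which is in general not an infinitesimal rotation in a single $2$-plane, so the assertion that these derivatives ``fill out'' the Lie algebra is unsubstantiated. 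As it stands the proof is complete only for $d=3$.

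If you wish to keep the group-theoretic route rather than adopt the paper's covering argument, there is a cleaner repair. The hypothesis places in $G$ the rotation by $2\pi/k$ in \emph{every} $2$-plane through $p$ (each such plane is $\Gamma^{\perp}$ for the orthogonal $(d-2)$-flat $\Gamma$ through $p$). This set is a full conjugacy class of $SO(d)$, so $G$ contains the normal subgroup it generates, which has a non-central element. For $d\neq 4$ the quotient of $SO(d)$ by its centre is simple, forcing $G=SO(d)$; for $d=4$ a rotation with angle pair $(2\pi/k,0)$ is not isoclinic, hence lies in neither proper normal factor of $SO(4)$, and the same conclusion follows. Only after such an argument (or the paper's elementary one) may the S\"uss--Schneider step be legitimately applied in all dimensions.
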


%{\color{blue} Maybe move the next paragraph to the proof section?}

The three dimensional case of Theorem \ref{brasil} follows immediately from the fact that the classification of the finite subgroups of $SO(3)$ is well understood %the special orthogonal group in $\Et$ 
(see for example  \cite[ Thm. 19.2]{arm}). Our proof is in all dimensions and it involves the S\"uss-Schneider's cha\-rac\-te\-ri\-za\-tion of the sphere in terms of concurrent congruent sections (see \cite{schneider, suss}).

 \section{Definitions and notation}\label{defs}

 Let $\mathbb{E}^{d}$ be the Euclidean space of dimension $d$ endowed with the usual scalar product $\langle \cdot, \cdot\rangle : \mathbb{E}^{d} \times \mathbb{E}^{d} \rightarrow \mathbb{R}$. %We take an orthogonal coordinate system $(x_1,...,x_{d})$ for  $\mathbb{E}^{d}$.
 %Let $B(d)=\{x\in \mathbb{E}^{d}: ||x||\leq 1\}$ be the unit $d$-ball, centered  at the origin, and 
 Let $\mathbb{S}^{d-1}=\{x\in \mathbb{E}^{d}: ||x|| = 1\}$ be the unit sphere in $\mathbb{E}^{d}$.
 
 Given a set $A\subset \Ed$,  we will denote the affine hull of $A$ by $\aff\{A\}$. We say that $\Gamma\subset \Ed$  is a $k$-\textit{flat}, $1\leq k\leq d-1$, if there exits a $k$-dimensional subspace $\Lambda\subset \Ed$ and a point $p\in \Ed$ such that $\Gamma=p+\Lambda$. For a $k$-flat $\Lambda\subset \Ed$ and a point $p\in  \Lambda$, we denote by
$\Lambda ^\perp$ the $(d-k)$-flat orthogonal to $\Lambda$ and passing through $p$. A $(d-1)$-flat will be called a hyperplane.

A {\it convex body} $K\subset {\mathbb E^d}$, $d\ge 2$,   is a convex compact set with non-empty interior. A \textit{convex hypersurface} is the boundary of a convex body $K$ in $\Ed$ and it will be denoted by $\bd K$. We will denote by $\inte K$ the set $K\backslash \bd K$.

Let $L\subset \Et$ be a  line and let $\theta \in [0,2\pi]$.  We denote by $R_{(L,\theta)}:\Et \rightarrow \Et$ the rotation by the angle $\theta$ around the axis $L$. To simplify the notation, the maps $R_{(L,\pi)}$, $R_{(L,\frac{2\pi}{n})}$ ($n \geq 3$) will be denoted, respectively, by  $R_L$ and $R_{L,n}$. A line $L\subset \Et$ is said to be an {\it axis of symmetry of order $n$, $n \geq 2$} (or an $n$-axis of symmetry)  if $R_{L, n}(K)=K$. In the case $n=2$, a 2-axis of symmetry of the convex body $K$ will just be called \textit{axis of symmetry} of $K$. %For instance, the cube $C=[-1,1]\times [-1,1]\times [-1,1]$ has three types of axis of symmetry: The lines determined by the centers of parallel faces are 4-axes of symmetry,  the diagonals are  3-axes of symmetry, and the lines determined by the mid points of parallel edges not in the same face, are  2-axes of symmetry.

%{\color{blue} Check if we use the above rotations in the paper -if not, we can simplify the definition to just the 2-axis of symmetry}

Let $K \subset \Ed$ be a convex body, $d\geq 3$ and let $\Pi$ be a  hyperplane. We denote by $S_{\Pi}: \Ed \rightarrow \Ed$ the reflection with respect to $\Pi$. The hyperplane $\Pi$ is said to be a \textit{hyperplane of symmetry} of $K$ if 
\[S_{\Pi}(K)=K.
\]

Observe that if $M\subset \Et$ is a body of revolution with axis $L$ and with a hyperplane of symmetry $\Pi$ perpendicular to $L$, then \textit{every line $\Gamma$ contained in $\Pi$ and passing through the point $\Pi \cap L$ is an axis of symmetry of} $K$. Indeed, every such line $\Gamma$ is contained in two perpendicular planes of symmetry of $K$. In Theorem \ref{grandota} we prove, for every dimension $d$, $d\geq 3$, the converse of the aforesaid statement. A three-dimensional proof of this result appeared in \cite[Thm 7]{ACJM} and was used in \cite{jemomo} (also in dimension 3) to prove that a body with a \textit{false axis of revolution} is a sphere.

Next, we generalize the definitions of axis of rotation, plane of symmetry and axis of symmetry to $k$-dimensional flats. 

\begin{definition}\label{d2ax}\textbf{ $\mathbf{(d-2)}$-axis of rotation.}
Let $K \subset \Ed$ be a convex body, $d\geq 3$, and let $\Gamma$ be a $(d-2)$-flat. 
Denote by $R$ the map $S_{\Pi_2}\circ S_{\Pi_1}$, where $\Pi_1,\Pi_2$ are two $(d-1)$-flats containing $\Gamma$, whose unit normal vectors $u_1,u_2$ make an angle of $\pi/n$. Then $\Gamma$ is said to be a $(d-2)$-\textit{axis of rotation of order $n$} of the body $K$ if 
\begin{eqnarray}\label{cigala}
K= R(K).
\end{eqnarray}
\end{definition}
The map $R$ can be considered as a \textit{rotation} with \textit{axis} $\Gamma$ by an angle $2\pi/n$ (see, for example, \cite{conway} Pgs. 6 and 23). It is clear that $R^n=I$.

\begin{definition}\label{axsym}\textbf{Axis of symmetry.} Let $K \subset \Ed$ be a convex body, $d\geq 3$, and let $L \subset \Ed$ be a line. Let $O(d)$ be the orthogonal group in $\Ed$. We denote by $R_{L}\in O(d)$ the orthogonal transformation satisfying $R_L(L)=L$, and $R_L|_{L^\perp}=-I|_{L^\perp}$.
%$  such that it acts as the identity on the line $L$, and for every $x$ {\color{red} on the hyperplane} $L^{\perp}$, it sends  $x\rightarrow -x$. 
The line $L$ is said to be a \textit{axis of symmetry of} $K$ if $R_{L}(K)=K$.
\end{definition}

If $L$ is an axis of symmetry of $K$,  all the sections of  $K$ by hyperplanes orthogonal to $L$ are centrally symmetric with center on $L$. On the other hand, all the sections of $K$ by hyperplanes containing $L$ have $L$ as a line of symmetry. This observation motivates the following $k$-dimensional generalization of axis of symmetry of a convex body.

\begin{definition}\label{kaxes}
Let $K \subset \Ed$, $d\geq 3$, be a convex body and let $\Gamma$ be a $k$-flat, $1\leq k \leq d-2$.
\begin{enumerate}[(a)]

\item We say that $\Gamma$ is a $k$-\textit{plane of symmetry} of $K$ if $\Gamma$ is contained in a $(k+1)$-flat $\Sigma$ such that the section $K \cap \Sigma$ has $\Gamma$ as hyperplane of symmetry. 

\item  We say that $\Gamma$ is  a $k$-\textit{axis of symmetry} of $K$ if for every $(k+1)$-flat $\Lambda$ containing $\Gamma$, the section $\Lambda \cap K$ has $\Gamma$ as a hyperplane of symmetry.  In other words,  $\Gamma$ is  a $k$-axis of symmetry if it is 
a $k$-plane of symmetry for every $(k+1)$-flat $\Lambda$ containing $\Gamma$.

\end{enumerate}
\end{definition}

As examples, consider the cross-polytope $P$ in $\Ed$, and the double cone $C$ in $\Ed$ obtained by rotating the triangle with vertices $(1,0,0, \ldots,0)$ and $(0,0,\ldots,\pm 1)$ around the $x_d$-axis. Then the plane $x_1 x_d$ is a 2-plane of symmetry of $P$, and a 2-axis of symmetry of $C$.

%{\color{blue} check with Bartek's paper }

\begin{definition}\label{chupadota} 
Let $K\subset \mathbb{E}^{d}$ be a convex body, $d\geq 3$,  and let $k$ be an integer, $1\leq k<d$. We say that $K$ is a $k$-\textit{body of revolution} if there exists a decomposition of 
$\mathbb{E}^{d}$ in the form 
$\mathbb{E}^{d}=\mathbb{E}^{d-k} \bigoplus \mathbb{E}^{k}$, $\mathbb{E}^{d-k}$ orthogonal to 
$\mathbb{E}^{k}$, such that for every affine $k$-flat 
$\Gamma$ parallel to $\mathbb{E}^k$, the set $\bd K\cap \Gamma$ is a $k$-dimensional Euclidean ball with center in $\mathbb{E}^{d-k}$.
\end{definition}

We observe that a $(d-1)$-body of revolution is just a usual body of revolution around a 1-dimensional axis, while a $1$-body of revolution is a convex body which is invariant under reflection on the hyperplane $\mathbb{E}^{d-1}$.

%{\color{blue} In this work we are going to consider unbounded subsets of $\Ed$, in particular, flats. In order to give a metric on the non-empty closed sets of $\Ed$, we will follow  Busemann \cite{bu}.  If $A$ is a non-empty set in $\Ed$, the distance from a point  $p$ to $A$ is denoted by $pA$ and defined as the greatest lower bound of $px$ as $x$ traverses $A$, where $px$  is the distance between $p$ and $x$. We select a point $p$ and define the distance between the (non-empty) sets $M,N$ as follows 
%\begin{eqnarray}\label{edorron}
%\delta_p(M,N)=\sup_{x\in \Ed}|  xM-xN |e^{-px}.
%\end{eqnarray}
%With such distance the family of non-empty closed sets of $\Ed$ is a metric space (see (3.8) of \cite{bu}). Furthermore, this metric space is a finitely compact space (Theorem (3.15) of \cite{bu}) and, precisely, such property play a fundamental role in several of our demostrations. We must observe (see Pag. 14 of \cite{bu}) that when we are dealing with bounded sets the factor $e^{-px}$ may be omitted in (\ref{edorron}) and distance then coincides with Hausdorff distance given as in (1.8.1) Pag. 48 of \cite{sch}. Therefore, from now on when we speak about convergence of a sequence of subsets of $\Ed$, it is with respect to the metric $\delta_p$ given by (\ref{edorron}). }

\section{Auxiliary results}

The proof of Theorem \ref{dream} uses the notion of $n$-star of line, and several facts about sequences of lines or subspaces of symmetry, which we present in this section.

\begin{definition}
A family of lines $\{L_1,\ldots, L_n\}$ is called  an $n$-star of lines with apex $x_0$ if the lines $L_i$ are in a plane, are concurrent at $x_0$, and the angle between two consecutive lines is $\frac{2\pi}{n}$.  
\end{definition}
\begin{definition}
Let $L_1$ and $L_2$ be two axes of symmetry of the convex body $K$. The star determined by $L_1$ and $L_2$, which will be denoted by $\Sigma (L_1,L_2)$, is the family of lines  
$\{T_n\}$ constructed in the following way: $T_1=L_1, T_2=L_2$, and, in general,
\[
 T_k=R_{T_{k-1}}(T_{k-2}).
\]
(The map $R_{L}$, for a line $L\subset \Ed$, was defined in Section 2).
  \end{definition}
We observe that, on one hand, each line in the family $\{T_n\}$ is an axis of symmetry of $K$; on the other hand, $T_i\subset \aff\{T_1,T_2\}$ for all $i$. 
If $L_1$ and $L_2$ are two lines with non-empty intersection, we denote by $\Omega (L_1,L_2)$  the set of all lines contained in the plane $\aff \{L_1,L_2\}$ and passing through the point  
$L_1\cap L_2$. 
\begin{remark}\label{escandalo}
Depending on the angle between $L_1$ and $L_2$ either  $\Sigma (L_1,L_2)$ is an  
$n$-star of lines for some integer $n$ or $\Sigma (L_1,L_2)$ is a dense set in 
$\Omega (L_1,L_2)$.
\end{remark}

Next, we define a metric in the set of subsets of $\Ed$. For a  non-empty set $A$, the distance from a point $p$ to $A$ is denoted by $pA$, and defined as the greatest lower bound of $px$, where $x$ is a point in $A$. 

We select a point $p \in \Ed$ and define the distance of the sets $M,N$ as 
\begin{eqnarray}\label{edorron}
\delta_p(M,N)=\sup_{x\in \Ed}|  xM-xN |e^{-px}.
\end{eqnarray}
With such distance the family of non-empty closed sets of $\Ed$ is a metric space (see (3.8) of \cite{bu}). Furthermore, this metric space is a finitely compact space (Theorem (3.15) of \cite{bu}) and this  property will play a fundamental role our proofs. %We must observe (see Pag. 14 of \cite{bu}) that when we are dealing with bounded sets the factor $e^{-px}$ may be omitted in (\ref{edorron}) and distance then coincides with Hausdorff distance given as in (1.8.1) Pag. 48 of \cite{sch}. 
When we speak about convergence of a sequence of subsets of $\Ed$, we mean  that the metric $\delta_p$, given by (\ref{edorron}), is involved, even if not stated explicitly.

The next Lemma lists several auxiliary results. We will prove III. and leave the remaining proofs to the reader. 

\begin{lemma}\label{contento}

\begin{itemize}

$~$

\item[I.] Let $\Phi$ be a plane convex figure and let   $\{L_1,\ldots, L_n \}$ be the collection of all its lines 
of symmetry. Then,  $\{L_1,\ldots,L_n \}$ is an $n$-star of lines.
 
\item[II.] Let $K \subset \Ed$ be a convex body and let $\{H_i \}$ be a sequence of hyperplanes that intersect $\inte K$. Suppose that $H_i \rightarrow H$, $L_i \subset H_i$ is a $(d-2)$-plane of symmetry ($p_i \in H_i$ is a center of symmetry) of $H_i \cap K$, and $L_i \rightarrow  L$ ($p_i \rightarrow  p$); then, $L$ is a $(d-2)$-plane of symmetry ($p$ is center of symmetry) of $H \cap K$. 

\item[III.] Let $K\subset \Ed, d\geq3,$ be a convex body. Suppose that $\{L_n\} \subset \Ed$ is a sequence of axes (hyperplanes) of symmetry of $K$ and $L$ is a line (hyperplane) such that $L_n \rightarrow L$. Then $L$ is an axis (hyperplane) of symmetry of $K$.

\item[IV.] Let $\{K_i\}$ be a sequence of convex figures such that  $K_i \rightarrow K$ and, for every $i\in\mathbb{N}$, the figure $K_i$ has two lines of symmetry determining an angle $\theta_i$. If 
$\lim_{i\to\infty}\theta_i=0$, then  $K$ is a circle.
\end{itemize}

\end{lemma}

\begin{proof}

[III.]  Since $L_n \rightarrow L$, by Theorem 1.8.7 in \cite{sch}, for all $q\in L \cap K$, there exists $q_n \in L_n \cap K$ such that $q_n \rightarrow q$. We denote by $\Gamma_n$  the orthogonal hyperplane to $L_n$ passing through $q_n$, and by $\Gamma$ the hyperplane orthogonal to $L$ passing through $q$. Since $L_n$ is an axis of symmetry of  $K$, $\Gamma_n \cap K$ is centrally symmetric with center at $q_n$. In virtue that $L_n \rightarrow L$ and $q_n \rightarrow q$, we have $\Gamma_n \rightarrow \Gamma$. Thus $\Gamma_n \cap K \rightarrow \Gamma \cap K$. From II. it follows that $\Gamma \cap K$ is centrally symmetric with center at $q$. Thus $L$ is an axis of symmetry of $K$.
\end{proof} 

The following result is well known. However, a proof in the case of a set with an infinite number of symmetries is not easily found in the literature (\cite[Sec.3.2 Thm 5]{conway}  contains a proof for the finite case). We include here an elementary proof in terms of the uniqueness of the circumsphere of a convex body.

 \begin{lemma}\label{concurrentes}
Let $K\subset \Ed$ be a convex body. We denote by $\Omega$ the circumsphere of $K$ and by $o$ its center. Then any hyperplane of symmetry and any axis of symmetry of $K$ must contain the point $o$.
\end{lemma}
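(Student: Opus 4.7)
The plan is to exploit the uniqueness of the circumsphere. Recall that for any bounded set in $\mathbb{E}^{3}$ the minimal enclosing ball exists and is unique (if two distinct balls of the same minimal radius $r$ both contained $K$, then $K$ would be contained in their intersection, which fits inside a strictly smaller ball, contradicting minimality). Denote this unique circumsphere of $K$ by $\Omega$ and its centre by $o$, and let $r$ be its radius.

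Next I would invoke the fact that any isometry $\varphi$ of $\mathbb{E}^{3}$ with $\varphi(K)=K$ must satisfy $\varphi(\Omega)=\Omega$. Indeed, $\varphi(\Omega)$ is a sphere of radius $r$ whose enclosed ball contains $\varphi(K)=K$; by the uniqueness statement above, $\varphi(\Omega)$ must coincide with $\Omega$. Consequently $\varphi$ fixes the centre $o$, i.e.\ $\varphi(o)=o$.

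Now I would apply this to the two kinds of symmetries in the statement. If $\Pi$ is a plane of symmetry of $K$, then $S_{\Pi}(K)=K$, so by the previous paragraph $S_{\Pi}(o)=o$, which forces $o\in \Pi$. If $L$ is an axis of symmetry of $K$, then $R_{L}(K)=K$, so $R_{L}(o)=o$; since the only fixed points of $R_{L}$ (the half-turn about $L$) are the points of $L$ itself, we conclude $o\in L$. In both cases the symmetry element passes through $o$, as claimed.

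The only delicate point is the uniqueness of the circumsphere, and for a convex body this is standard: if two closed balls of equal minimal radius contained $K$, their intersection would be a proper lens-shaped region of diameter strictly less than $2r$, hence enclosable in a ball of radius $<r$, contradicting the minimality. Everything else is a direct rigid-motion argument, so no further obstacle is expected.
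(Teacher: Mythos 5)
Your proposal is correct and follows essentially the same route as the paper: both arguments rest on the uniqueness of the circumsphere, observing that a symmetry of $K$ carries $\Omega$ to another minimal enclosing sphere (the paper phrases this as the contradiction $K\subset \Omega\cap S_{\Gamma}(\Omega)$, which would yield a smaller enclosing ball), hence fixes $\Omega$ and therefore its centre $o$. The final step identifying the fixed-point sets of $S_{\Pi}$ and $R_{L}$ with $\Pi$ and $L$ is the same in both.
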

\begin{proof}
Suppose that there exists a hyperplane  of symmetry $\Gamma$ of $K$ such that $o\notin \Gamma$. Then $K=S_{\Gamma}(K)$. Furthermore, since $S_{\Gamma}(\Omega)\not= \Omega$ and $K\subset \Omega$, we have  $K=S_{\Gamma}(K)\subset S_{\Gamma}(\Omega)$. Hence, $K\subset \Omega \cap S_{\Gamma}(\Omega)$. This implies that there exists a sphere of smaller radius that contains $K$, contradicting the assumption that $\Omega$ is the circumsphere of $K$.

On the other hand, let $L$ be an axis of symmetry of $K$, and assume that $L$ does not contain $o$. Then, 
$\Omega\not=R_{L}(\Omega)$. Since $K\subset \Omega$ we have that 
\[
K=R_{L}(K)\subset R_{L}(\Omega),
\]
because $L$ is an axis of symmetry of $K$. Thus, $K\subset \Omega \cap R_{L}(\Omega)$. Consequently, $K$ is contained in a sphere of smaller radius than $\Omega$, which again is a contradiction. 
\end {proof}

The following result was proven in \cite{barthe} in the context of Mahler's conjecture. We will use it in our proof of Theorem \ref{fantasia}.

\begin{proposition}\label{frances}Let $K\subset \Ed$ be a convex body, $d\geq 3$, and let $\{\Pi_i\}$ be a sequence of hyperplanes in $\Ed$. Suppose that $\Pi_i$ is a hyperplane of symmetry of $K$ for $i=1,2,\ldots$. If, in addition, there does not exist an integer $k$, $1\leq k \leq d-2$, and a $k$-flat $\Gamma$ such that $ \Gamma \subset  \Pi_i$ for $i=1,2,\ldots$, then $K$ is an $(d-1)$-sphere. 
\end{proposition}

\section{Proof of Theorem \ref{grandota}}
First, we consider the case $k=d-1$. We claim that $K$ is centrally symmetric. Let  $\Lambda^{\perp}$ denote the line orthogonal to $ \Lambda$ that passes through $p$. For every  hyperplane $\Omega$, $  \Lambda^{\perp} \subset \Omega$, we consider the line $ L\subset   \Lambda$ orthogonal to $\Omega$ and passing through $p$. Since $L$ is an axis of symmetry of the body $K$, by hypothesis, it follows that $\Omega \cap K$ is centrally symmetric with center at $p$. Varying $\Omega$, $  \Lambda^{\perp} \subset \Omega$, we obtain that $K$ is centrally symmetric with center at $p$.

We take a coordinate system such that $p$ is the origin. Let $\Omega$ be an hyperplane with $\Lambda ^{\perp} \subset \Omega$. We will show that every such $\Omega$ is an hyperplane of symmetry of $K$. Let $L\subset  \Lambda$ be the line passing through $p$ and perpendicular to $\Omega$. Let $\Phi$ be an affine hyperplane parallel to $\Omega$ such that $ \Phi \cap K\not= \emptyset$. We take $x\in \Phi \cap \bd K$. We claim that there exists a line $\Delta$ parallel to $L$, such that $x\in \Delta$ and $\Delta \cap \bd K=\{x,y\}$ where $y\in (-\Phi \cap \bd K)$. Indeed, since $L$ is an axis of symmetry of the body $K$, the sections $\Phi \cap \bd K$ and $-(\Phi \cap \bd K)$ are centrally symmetric with centers $c, -c$, respectively, where $c,-c\in L$. We observe that
\begin{eqnarray}\label{pastelito}
-(\Phi \cap \bd K)=(\Phi \cap \bd K)-2c.    
\end{eqnarray}
This holds because  $-c+(c-x)=-x\in -(\Phi \cap \bd K)$, and thus  $-c-(c-x)=x-2c \in -(\Phi \cap \bd K)$). Hence, if we define $\Delta=x+L$ and $y=x-2c$, by (\ref{pastelito}) we have that $y\in -(\Phi \cap \bd K)$. We have shown that $\Omega$ is an hyperplane of symmetry of $K$.

 Now let $\Gamma$ be a hyperplane parallel to $\Lambda$ such that $\Gamma \cap K\not=\emptyset$. By the argument in the previous paragraph, the section $\Gamma \cap K$ has the property that every $(n-2)$-plane $H$ passing through $\Gamma \cap \Lambda^{\perp}$ is a $(n-2)$-plane of symmetry of $\Gamma \cap K$ (every such plane $H$ can be expressed in the form $H:=\Omega \cap \Gamma$, with $\Omega$ containing $\Lambda^{\perp}$). Hence, property (B) from the introduction,  $\Gamma \cap K$ is an sphere with center at $\Gamma \cap \Lambda^{\perp}$. We have shown that $K$ is a $(d-1)$-body of revolution.

We consider now the case $2 \leq  k \leq d-2$. We are going to prove that if $\Lambda'$ is a $k$-flat parallel to $\Lambda$ and such that $\Lambda' \cap \inte K \not= \emptyset$, then $\Lambda' \cap K$ is a ball with center at $\Lambda ^\perp$. In other words, $K$ is a $k$-body of revolution.

We denote by $\Delta$ the $(k+1)$-flat spanned by $\Lambda$ and $\Lambda'$, i.e., $\Delta=\aff\{\Lambda,\Lambda'\}$. Since every line $L$, with $p\in L$ and $L\subset \Lambda$, is an axis of symmetry of $K$, then every such $L$ is an axis of symmetry of the body $\Delta \cap K$. Thus, in virtue of case $k=d-1$ of Theorem \ref{grandota}, $\Delta \cap K$ is $k$-body of revolution, i.e., for every $k$-flat $\Gamma$, parallel to $\Lambda$, the section $\Gamma \cap (\Delta\cap K)$ is a $(k-1)$-sphere with center at the line $\Lambda ^\perp \cap \Delta$. Therefore, in particular, $\Lambda' \cap \inte K$ is a $(k-1)$-sphere with center at $\Lambda ^\perp$ as we have claimed.  $\Box$ 

\section{Proof of Theorem \ref{copaoro}}  

%\begin{lemma}\label{copaamerica}
%Let $K\subset \Ed$ be a convex body, $d\geq 3$, $p\in \Ed$ be a point. Su\-ppo\-se that every $(d-2)$-flat passing through $p$ is a $(d-2)$-axis of symmetry of $K$, then $K$ is a sphere with center at $p$.
%\end{lemma}

%\begin{proof}
%Let $\Lambda$ be a hyperplane, $p\in \Lambda$. In virtue of the hypothesis, each $(d-2)$-plane $\Gamma \subset \Lambda$, $p \in \Gamma $, is a $(d-2)$-axis of $K$ and, by definition, $\Lambda \cap K$ has $\Gamma$ as a $(d-2)$-flat of symmetry in $\Lambda$. By Fact (B) in the introduction,  $\Lambda \cap K$ is a $(d-1)$-sphere. Thus $K$ is a sphere.
%\end{proof}

%{\color{blue} Again this follows from Fact B applied twice, no need for Proposition 1. We no longer use this Lemma in the proof of Theorem 2 (check this)}

\begin{lemma}\label{mundial}
Let $K\subset \Ed$ be a convex body, $d\geq 3$, and let $\Gamma$ be a $k$-flat, $1\leq k \leq d-2$. Then $\Gamma$ is a $k$-axis of symmetry of  $K$ if and only if, for all $(d-k)$-flat $\Omega$ orthogonal  to $\Gamma$ and such that $\Omega \cap K \not=\emptyset$, the section $\Omega \cap K$ is centrally symmetric with center at the point $\Omega \cap \Gamma$.
\end{lemma}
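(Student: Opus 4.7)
\textit{Proof proposal for Lemma} \ref{mundial}. The plan is to prove both implications by reducing each of them to the same elementary linear-algebra observation: a point $x\in\Ed$ and its orthogonal projection $q$ onto $\Gamma$ determine a segment $[x,2q-x]$ which is orthogonal to $\Gamma$ and lies simultaneously in the $(k+1)$-flat $\aff\{\Gamma,x\}$ and in the $(d-k)$-flat $\Omega_q$ through $q$ orthogonal to $\Gamma$. In both flats the reflection across $\Gamma$ acts on this segment as $x\mapsto 2q-x$, since inside $\aff\{\Gamma,x\}$ reflection across the $k$-flat $\Gamma$ sends the orthogonal component $x-q$ to $-(x-q)$, and inside $\Omega_q$ reflection across the single point $q=\Omega_q\cap\Gamma$ does the same. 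Everything else is bookkeeping.

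For the direct implication, suppose $\Gamma$ is a $k$-axis of symmetry of $K$ and let $\Omega$ be a $(d-k)$-flat orthogonal to $\Gamma$ with $\Omega\cap K\neq\emptyset$. Set $q=\Omega\cap\Gamma$, which is a single point by dimension count. For an arbitrary $x\in\Omega\cap K$, if $x=q$ the symmetry is trivial, otherwise the $(k+1)$-flat $\Lambda=\aff\{\Gamma,x\}$ contains $\Gamma$, so by hypothesis $\Gamma$ is a $k$-flat of symmetry of $\Lambda\cap K$. Since $x-q$ is orthogonal to $\Gamma$ (because $\Omega$ is), the reflection of $x$ across $\Gamma$ inside $\Lambda$ is $2q-x$, hence $2q-x\in\Lambda\cap K$. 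As $2q-x$ also lies in $\Omega$ (it is on the line through $q$ and $x$, both in $\Omega$), we obtain $2q-x\in\Omega\cap K$, proving central symmetry of $\Omega\cap K$ at $q$.

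For the converse, assume that $\Omega\cap K$ is centrally symmetric at $\Omega\cap\Gamma$ for every $(d-k)$-flat $\Omega\perp\Gamma$ meeting $K$, and let $\Lambda$ be any $(k+1)$-flat containing $\Gamma$. Given $x\in\Lambda\cap K$, let $q$ be the orthogonal projection of $x$ onto $\Gamma$ and let $\Omega_q$ be the $(d-k)$-flat through $q$ orthogonal to $\Gamma$. Then $x\in\Omega_q$ because $x-q$ is perpendicular to $\Gamma$, so by hypothesis $2q-x\in\Omega_q\cap K\subset K$. Moreover $2q-x$ lies on the line through $q\in\Gamma$ and $x\in\Lambda$, which is contained in $\Lambda$, so $2q-x\in\Lambda\cap K$. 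As already noted, $2q-x$ is exactly the image of $x$ under reflection of $\Lambda$ across the $k$-flat $\Gamma$, so $\Gamma$ is a $k$-flat of symmetry of $\Lambda\cap K$.

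I do not anticipate a genuine obstacle; the only delicate point is checking in each direction that the two flats used contain the reflected point $2q-x$, which follows from the fact that they both contain $\Gamma$ together with the segment $[x,q]$. The argument relies only on the definition of $k$-axis of symmetry given in the paper and on elementary orthogonal decomposition in $\Ed$. $\Box$
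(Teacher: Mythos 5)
Your proof is correct and, for the forward implication, follows essentially the same route as the paper: both arguments reduce the central symmetry of $\Omega\cap K$ about $q=\Omega\cap\Gamma$ to the reflection $x\mapsto 2q-x$ along the perpendicular to $\Gamma$ through $q$, which lies simultaneously in $\Omega$ and in the $(k+1)$-flat $\aff\{\Gamma,x\}$. You are in fact more complete than the paper, whose printed proof of this lemma establishes only the ``only if'' direction and omits the converse entirely; your second paragraph supplies that missing half correctly.
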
  
\begin{proof}
Let $\Gamma$ be a $k$-axis and let $\Omega$ be a $(d-k)$-flat such that $\Omega$ is orthogonal to $\Gamma$ and $\Omega \cap K \not=\emptyset$. For every $(k+1)$-flat $\Pi, \Gamma \subset \Pi$,  the section $\Pi \cap K$ is symmetric with respect to $\Gamma$. Hence the line segment $\Omega \cap (\Pi \cap K)$ has $\Omega \cap \Gamma$ as its midpoint. Varying $\Pi$ and $\Gamma \cap \Pi$, it follows that $\Omega \cap K$ is centrally symmetric with center at $\Omega \cap \Gamma$. 
\end{proof}

\textit{Proof of Theorem} \ref{copaoro}. We will show that 
all the $(k+1)$ sections of $K$ parallel to $\Lambda$ are Euclidean balls, whose centers are situated in a $(d-(k+1))$-flat orthogonal to 
$\Lambda$ passing through $p$, i.e., $K$ is a $(k+1)$-body of revolution.

First, we prove that $K$ is centrally symmetric. 
Recall that $\Lambda$ is a $(k+1)$-flat, and thus $\Lambda^{\perp}$ is $(d-(k+1))$-flat.
For every $(d-k)$-flat $\Omega$, $\Lambda^{\perp} \subset \Omega$, we consider the $k$-flat 
$\Omega^{\perp}\subset \Lambda$. By hypothesis, $\Omega^{\perp}$ is a $k$-axis of symmetry of $K$. From Lemma \ref{mundial}, it follows that $\Omega \cap K$ is centrally symmetric with center at $p$. Varying $\Omega$, $ \Lambda^{\perp} \subset \Omega$, we obtain that $K$ is centrally symmetric with center at $p$.    

We take a system of coordinates with $p$ as the origin. We will show next that every $(d-k)$-flat $\Omega$, $\Lambda^{\perp} \subset \Omega$, is a $(d-k)$-axis of symmetry of $K$. Observe that this fact is, in a sense, dual to our hypothesis.

Let $\Omega$ be a $(d-k)$-flat such that $\Lambda^{\perp} \subset \Omega$, and let $\Gamma\subset \Lambda$ be a $k$-flat passing through $p\in \Gamma  $ and orthogonal to $\Omega$. %In order to prove that $\Omega$ is a $(d-k)$-axis of symmetry of $K$ 
We need to show that for every $(d-k+1)$-flat $\Sigma$ containing $\Omega$, $\Omega$ is a $(d-k)$-plane of symmetry of $\Sigma \cap K$. Let $\Sigma$ be a $(d-k+1)$-flat with   $\Omega \subset \Sigma$ and let
$\Phi\subset \Sigma$ be a $(d-k)$-flat parallel to $\Omega$ (thus, orthogonal to $\Gamma$) such that $ \Phi \cap K\not= \emptyset$. We take $x\in \Phi \cap \bd K$. We claim that there exists a line $\Delta$ perpendicular to  $\Omega$, with $x\in \Delta$ and $\Delta \cap \bd K=\{x,y\}$, where $y\in (-\Phi) \cap \bd K$, and $\Delta \subset \Sigma$. By Lemma \ref{mundial}, the sections $\Phi \cap \bd K$ and $-(\Phi \cap \bd K)$ are centrally symmetric. We denote their centers by $c, -c$, respectively. Notice that $c,-c\in \Gamma$ and
\begin{eqnarray}\label{rica}
-(\Phi \cap \bd K)=(\Phi \cap \bd K)-2c.    
\end{eqnarray}
(Since $-c+(c-x)=-x\in -(\Phi \cap \bd K)$, we get $-c-(c-x)=x-2c \in -(\Phi \cap \bd K)$). Thus, letting $L(p,c)$ be the line passing through the points $p,c$, and  defining $\Delta=x+L(p,c)$ and $y=x-2c$, it follows from (\ref{rica}) that $y\in -(\Phi \cap \bd K)$, and we obtain what we have claimed.

Now, let $\Lambda'$ be a $(k+1)$-plane parallel to $\Lambda$ such that $\Lambda' \cap K\not=\emptyset$. %We will show that the section $\Lambda' \cap K$ has the property that every line $L\subset \Lambda'$ passing through $ \Lambda' \cap \Lambda^{\perp}$ is a line of symmetry of $\Lambda' \cap K$ (every such line $L$ can be expressed in the form $L=\Omega \cap \Lambda'$, where $\Omega$ a $(d-k)$-flat containing $\Lambda^{\perp}$). 
Consider a line $L\subset \Lambda'$  passing through 
$\Lambda' \cap \Lambda^{\perp}$ and let $\Omega$ be a $(d-k)$-flat containing $\Lambda^{\perp}$ such that $L=\Omega \cap \Lambda'$. In order to prove that $L$ is a line of symmetry of $\Lambda' \cap K$,  we will show that for every two dimensional plane $\Pi\subset \Lambda'$ containing $L$ the section $\Pi \cap K$ has $L$ as a line of symmetry. Let $\Pi \subset \Lambda'$ be a plane with $L\subset \Pi$, and let $M\subset \Pi$ be a line perpendicular to $L$. Denote by $\Sigma$ the $(d-k+1)$-plane defined by $\Omega$ and $M$. By virtue that $\Omega$ is a $(d-k)$-axis of symmetry of $K$,    
$\Omega$ is a $(d-k)$-plane of symmetry of $\Sigma \cap K$.
It follows from the choice of $M$ that $\Pi \cap K$ has $L$ as line of symmetry (notice that $M\perp \Omega$ because $M\subset \Pi \subset \Lambda'$ and $\Lambda^{\perp}\subset \Omega$). From the property (A) mentioned in the introduction, we conclude that $\Lambda' \cap K$ is a Euclidean ball with center at $\Lambda' \cap \Lambda^{\perp}$.   $\Box$ 

\section{Proof of Theorem 3}

\begin{lemma}\label{kiara} 
Let $K\subset \Ed, d\geq 3$, be a convex body. Let $\Pi$ be a $k$-dimensional subspace, $2\leq k\leq d-1$, and let $L'\subset \Ed \backslash \Pi$ be a line through the origin $o$, such that  $L'\not=\Pi^{\perp}$. 
Suppose that every line $L\subset \Pi$ through $o$ is an axis of symmetry of $K$ and, furthermore, $L'$ is an axis of symmetry of the body $K$. Then every line through $o$ and contained in $\aff\{\Pi, L'\}$ is an axis of symmetry of $K$.
\end{lemma}
\begin{proof}
Let $v$ be a unit vector parallel to $L'$. For each $u\in \mathbb{S}^{k-1} \subset \Pi$, we denote by $\alpha(u)$ the angle between the vector $u$ and $v$. Let $\delta_1$ and $\delta_2$ be the minimum and the maximum of $\alpha (\cdot)$.  Since $L'\subset \Ed \backslash \Pi$ and $L'\not=\Pi^{\perp}$ it follows that $\delta_1>0$ and $\delta_2<\pi/2$. By virtue of the continuity of $\alpha(u)$ and the compactness of $\mathbb{S}^{k-1}$, we have that 
$\alpha (\mathbb{S}^{k-1})=[\delta_1,\delta_2]$. We denote by $F$ the subset of irrational multiples of $\pi$ in $[\delta_1,\delta_2]$ and by $E$ the set $\alpha^{-1}(F)$. Since $F$ is dense in $[\delta_1,\delta_2]$, the set $E$ is dense in $\mathbb{S}^{k-1}$. For each $u\in E$, we denote by $L(u)$ the line through the origin with direction $u$, and by $\Pi(u)$ the plane 
$\aff \{u,v\}$. Since $u\in E$, we have that $\alpha(u)\in F$ and, consequently, 
$\Sigma(L(u), L')$ is a set of axes of symmetry of $K$ in $\Pi(u)$ which is dense in 
$\Omega(L(u), L')$. Thus, by Lemma \ref{contento} [III], every line in $\Pi(u)$ through $o$, is an axis of symmetry of $K$.

Let $w$ be a unit vector in span$\{\Pi,L'\}$. We will show that  there exists an axis of symmetry $L_w$ of $K$ such that $L_w=span\{w\}$. Let $e \in \mathbb{S}^{k-1}$ such that 
$span\{e\}=\Pi \cap span\{v,w\}$. Since $E$ is dense in $\mathbb{S}^{k-1}$, there exists a sequence $\{e_i\}\subset E$ such that $e_i \rightarrow e$, when $i \rightarrow \infty$. It follows that $\Pi(e_i) \rightarrow span\{v,w\}$. Hence, we can find a sequence of lines $\{\Delta_i\}$, $\Delta_i \subset \Pi(e_i)$, such that $\Delta_i \rightarrow L_w=span\{w\}\subset span\{v,w\}$. By the argument in the previous paragraph, every line $\Delta_i$ is an axis of symmetry of the body $K$. Therefore, by Lemma \ref{contento} [III], $L_w$ is an axis of symmetry of $K$.
\end{proof}

%{\color{red} The old Lemma contento is now part of Lemma 1 [III.]  The number changes correctly because we used a label, but we need to add [III] each time we use it }

\textit{Proof of Theorem} \ref{dream}. In virtue of Lemma \ref{concurrentes}, there exists a point $o\in \Ed$ such that  $o\in L_n$, for all $n$. We take a coordinate system having $o$ as the origin. We assume that $V=span\{L_1,L_2,\ldots\}$ has dimension $k$, $k\geq 2$.  

First, we consider the case when there exists a two-dimensional plane $H\subset V$ and an infinite subsequence $L_{n_i}$ of $\{L_n\}$ such that $\{L_{n_i}\} \subset H$, for all $i$. By the compactness 
of $\mathbb{S}^{2}$, there is an infinite subsequence of $L_{n_i}$, which will be denote again by $L_{n_i}$, and a line $L\subset H$ through $o$ such that $L_{n_i} \rightarrow L$, when $i \rightarrow \infty$. By Lemma \ref{contento} [III], $L$ is an axis of symmetry of $K$.

We will show  that every line $\Delta \subset H$,  $o \in \Delta$, is an axis of symmetry of  $K$. By Lemma \ref{contento} [III], it is enough to find a sequence of axes of symmetry $\{\Delta_n\}\subset H$ of $K$ such that $\Delta_n \rightarrow \Delta$.  First of all,  suppose that, for all $i$, $\Sigma (L,L_{n_i})$ is a $f(n_i)$-star of lines for some integer $f(n_i)$. Since  $L_{n_i} \rightarrow L$, it follows  that $f(n_i) \rightarrow \infty$. Thus, given $\epsilon > 0$, there exists an integer $n_i$ such that  the difference between the angle of $\Delta$ and the angle the some line $\Delta_{k(n_i)}$ in $\Sigma (L,L_{n_i})$ is less than $\epsilon$. % It is clear that $\Delta_{k(n_i)} \rightarrow \Delta$. 
On the other hand, if for some $n_i$ the angles corresponding to the lines in  the star $\Sigma (L,L_{n_{i}})$ determine a dense set in $[0,2\pi]$, we obtain immediately the existence of a sequence 
$\{\Delta_n\}\subset \Sigma (L,L_{n_i})$ of axes of symmetry of $K$ such that $\Delta_n \rightarrow \Delta$. 

In particular, we have proven the case  $k=2$ of the theorem. Since every line $L\subset V$, through $o$, is an axis of symmetry of the body $K$, by Theorem \ref{grandota} we conclude that $K$ is a 2-body of revolution. 

%{\color{red} Idea: $L_n$ converges to $L$, so there is a line arbitrarily close to $L$. Do the start between these 2, and we will get a line arbitrarily close to $\Delta$}.

 We consider the case in which there is no plane $H\subset V$ containing an infinite subsequence of $\{L_n\}$. Then, letting $\Pi_n$ be the plane $span\{L,L_{n}\}$, we have that the sequence of planes $\{\Pi_n\}$ is infinite, hence there exists a limiting plane $\Pi\subset V$ for a subsequence, which we will still denote as $\Pi_{n}$. If for some integer $n_0$, the angles corresponding to the lines in $\Sigma (L,L_{n_0})$ are a dense set in $[0,2\pi]$, we have that for each line $\Delta \subset span\{L,L_{n_0}\}$ there exists a sequence $\{\Delta_n\}\subset \Sigma (L,L_{n_0})$ of axes of symmetry of $K$, such that $\Delta_n \rightarrow \Delta$. Thus,  by Lemma \ref{contento} [III], $\Delta$ is an axis of symmetry of the body $K$, and we obtain that all lines in $\Pi$ are axes of symmetry of $K$. On the other hand, if we do not have a dense set of angles, then every  $\Sigma (L,L_n)$  is a $f(n)$-star of lines,  for some integer $f(n)$. Since the sequence of planes $\{\Pi_n\}$ is infinite, the sequence $\{f(n)\}$ also is infinite. In virtue that $L_n \rightarrow L$, it follows that $f(n) \rightarrow \infty$. %We are going to prove that all line in $\Pi$ is axis of symmetry of $K$. 
Let  $\Delta \subset \Pi$ be  a line, $x\in \Delta$. Again, we will show that there exists a sequence $\{\Delta_n\}$ of axes of symmetry of $K$ such that $\Delta_n \rightarrow \Delta$. Since 
$\Pi_n \rightarrow \Pi$, there exists a sequence of lines $\{\Gamma_n \}$  such that $\Gamma_n \subset \Pi_ n $ and $\Gamma_n \rightarrow \Delta$, i.e.,  given $\epsilon > 0$, there exists $N_1$ such that if  
$n> N_1$, then  $\delta_p(\Gamma_n,\Delta)< \epsilon/2$. On the other hand, since $f(n) \rightarrow \infty$, given $\epsilon > 0$, there exists $N_2$ such that $n> N_2$, there exists a line of $\Sigma (L,L_n)$, say  
$\Delta_{k(n)}$, such that $\delta_p(\Delta_{k(n)}, \Gamma_n)< \epsilon/2$. Taking  
$n> \max \{N_1,N_2\}$, then 
$\delta_p(\Delta_{k(n)}, \Delta)\leq \delta_p(\Delta_{k(n)}, \Gamma_n)+ \delta_p( \Gamma_n, \Delta)< \epsilon/2 + \epsilon/2=\epsilon$, i.e., $\Delta_{k(n)} \rightarrow \Delta$.

In the previous paragraph, we have proven that there exists a plane $\Pi\subset V$ such that every line in $\Pi$, through $o$, is an axis of symmetry of $K$. Since the case $k=2$ has already been established, let  $3\leq k \leq d-1$. Since $k>2$, there exists an integer $n_1$ such that $L_{n_1}$ is not contained in $\Pi$ and, by condition (iii) of Theorem \ref{dream}, $L_{n_1}$ is not perpendicular to $\Pi$. It follows from Lemma \ref{kiara} that every line in 
$span\{\Pi, L_{n_1}\}$, passing through $o$, is an axis of symmetry of $K$. If $d=3$, then $k=3$ and $\Et=span\{\Pi, L_{n_1}\}$ and, consequently, $K$ is a sphere by the property (A) in the introduction.  %(in particular, it follows that all the sections of $K$ are centrally symmetric and it is well know that such condition implies that $K$ is an ellipsoid and, on the other hand, the only ellipsoid with an infinite number of axis of symmetry is the sphere). 
If $d>3$ and $k=3$, then, by Theorem \ref{grandota}, $K$ is a 3-body of revolution. 
If $k>3$, using Lemma \ref{kiara} repeatedly, we can assume the existence of axes of symmetry $\{L_{n_1}, L_{n_2},\ldots,L_{n_{(k-2)}}\} \subset \{L_n\}$ of $K$ such that every line $L'\subset V=span\{\Pi, L_{n_1}, L_{n_2},\ldots,L_{n_{(k-2)}}\}$ through $o$ is an axis of symmetry of $K$. Thus, if $k\leq d-1$, then $K$ is a $k$-body of revolution by Theorem \ref{grandota}. If $k=d$, then $K$ is a sphere and the proof of Theorem \ref{dream} is now complete. $\Box$

\section{The proofs of Theorems \ref{fantasia} and \ref{brasil}}

\textit{Proof of Theorem} \ref{fantasia}.
In virtue of the hypothesis, there exists an integer $k$, $1\leq k \leq d-2$, and a $k$-flat 
$\Gamma$ such that the hyperplanes of symmetry $\Pi_i$ of $K$ satisfy (\ref{miami}). Let $\Delta$ be a 
$(d-k)$-flat orthogonal to $\Gamma$ and such that $\Delta \cap  K\not= \emptyset$. By (\ref{miami}), $\Delta$ is orthogonal to $\Pi_i, i=1,2,\ldots$. Consequently, $\Delta \cap \Pi_i$  is a $(d-k-1)$-flat of symmetry of $\Delta \cap K$. We claim that for  $1\leq t \leq d-k-1$, there is no $t$-flat $\Psi \subset \Delta$ and no subsequence $\Delta \cap \Pi_{i_j}$ of $\Delta \cap \Pi_i$ such that $ \Psi \subset  \Pi_{i_j}, j=1,2,\ldots$ Otherwise, we would have that $\aff\{ \Gamma, \Psi\}\subset \Pi_{i_j}$ and since  $\dim (\aff\{ \Gamma, \Psi\})>k$ we would contradict the maximality of $k$. By Proposition \ref{frances}, $\Delta \cap K$ is a sphere with center at $\Gamma$, i.e., $K$ is a $(d-k)$-body of revolution. 
 $\Box$    
 
\textit{Proof of Theorem} \ref{brasil}.  
Let $\Pi$ be a hyperplane, and $p\in \Pi$. We will show that for every hyperplane $\Gamma$ with $p\in \Gamma$ and $\Gamma\not=\Pi$,  the sections $\Pi \cap K $ and $\Gamma \cap K$ are congruent, i.e., there exists an orthogonal transformation 
$\Omega: \Ed \rightarrow \Ed$ such  that $\Omega(\Pi \cap K) =\Gamma \cap K$. We denote by $\Delta$ the subset  of $\mathbb{S}^{d-1}$ defined as follows:  $u\in \Delta$ if there exists a hyperplane $\Sigma$, $p\in \Sigma$, whose corresponding unit normal vector is $u$ and $\Sigma \cap K$ and $\Pi \cap K$ are congruent. We are going to show that $\Delta =\mathbb{S}^{d-1}$. We denote by $w$ the unit normal vector of $\Pi$. Let $L\subset \Pi$ be a  $(d-2)$-plane, $p\in L$. In virtue that $L$ is a $(d-2)$-axis of rotation of order $k$, there  are hyperplanes 
$\Sigma_1$, $\Sigma_2$ with unit normal vectors $v_1,v_2$, respectively, such that $\Pi \cap K$ can be obtained from $\Sigma_1 \cap K$ and $\Sigma_2 \cap K$ after applying two rotations, both with axis $L$, one by an angle $2\pi/k$ and the other by an angle $2(k-1)\pi/k$. Since the boundary of $K$ is a continuous surface, varying $L$, always  contained in  
$\Pi$, we have that the collection of unit normal vector of the hyperplanes $\Sigma$ is a  $(d-2)$-sphere $\tau_{w}\subset \mathbb{S}^{d-1}$ with center at $w$ and each vector $v\in \tau_{w}$ makes an angle $2\pi/k$ with $w$. It is clear that $\tau_{w}\subset \Delta$. Applying the same argument it follows that if $v$ is in $\Delta$, then $\tau_v \subset \Delta$. Thus 
\begin{eqnarray}\label{mole}
\bigcup_{v\in \tau_{w}} \tau_{v}\subset \Delta.
\end{eqnarray}
We denote by $B_w$ the interior of the  spherical cap with boundary $\tau_{w}$ in $\mathbb{S}^{d-1}$. In virtue that 
\[
B_w=\bigcup_{v\in \tau_{w}} \tau_{v},
\]
we get from (\ref{mole}) that $B_w \subset \Delta$. Finally, we observe that the collection of sets $\{B_v:v\in \tau_w\}$ covers the sphere. Hence $\Delta = \mathbb{S}^{d-1}$, i.e., any two sections of $K$ passing through $p$ are congruent.  By the S\"uss-Schneider  Theorem (see \cite{schneider, suss}), we conclude that $K$ is a sphere . $\Box$

\begin{remark}
Observe that a $(d-2)$-flat of symmetry of a convex body $K$ is, at the same time, a $(d-2)$-axis of rotation of order 2 of $K$. Therefore Theorem \ref{brasil}, for the case $k=d-2$, can be considered as a generalization of Theorem \ref{grandota} for $(d-2)$-axis of rotation of order 2. 
\end{remark}

We conclude the paper by stating a conjecture, which would reduce the hypothesis of Theorem \ref{brasil} to a countable collection of axes of rotation with not necessarily the same order.

\begin{conjecture}
Let $K\subset \Ed$ be a convex body, $d\geq 3$. Suppose that there exists a sequence $\{\Gamma_n\}\subset \Ed $ of $(d-2)$-axes of rotation of $K$ of order $\phi(\Gamma_n)$, such that the sequence $\{\phi(\Gamma_n)\}$ has an infinite number of terms different from 2. Then $K$ is a sphere.
\end{conjecture}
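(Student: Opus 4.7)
The plan is to pass from the discrete data of the axes $\{\Gamma_n\}$ to a continuous one-parameter subgroup of rotations preserving $K$, and then show that combining this continuous symmetry with the remaining discrete axes forces $K$ to be a sphere.

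I would begin by establishing that every $\Gamma_n$ with $\phi(\Gamma_n)\geq 3$ passes through the centre $o$ of the circumsphere $\Omega$ of $K$: adapting Lemma \ref{concurrentes}, if $o\notin \Gamma_n$ then the rotation $R_n$ of order $\phi(\Gamma_n)$ about $\Gamma_n$ satisfies $R_n(\Omega)\neq \Omega$ and $K\subset \Omega\cap R_n(\Omega)$, contradicting the minimality of $\Omega$. After extracting the subsequence with $\phi(\Gamma_n)\geq 3$ granted by hypothesis and translating so that $o$ is the origin, the symmetry group $G := \{T\in O(d):T(K)=K\}$ is a closed (hence Lie) subgroup of $O(d)$ containing each $R_n$, and is in particular infinite. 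By compactness of the Grassmannian of $(d-2)$-dim subspaces of $\Ed$, pass to a further subsequence with $\Gamma_n\to \Gamma_\infty$.

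The next step is to produce a continuous one-parameter subgroup of $G$. If $\phi(\Gamma_n)\to\infty$ along a subsequence, then for each $\theta\in \mathbb{R}$ one may choose integers $m_n$ with $2\pi m_n/\phi(\Gamma_n)\to \theta$; the powers $R_n^{m_n}\in G$ converge to the rotation by $\theta$ about $\Gamma_\infty$, so closedness of $G$ gives $SO(2)_{\Gamma_\infty}\subset G$. Otherwise $\phi(\Gamma_n)$ is eventually constant, equal to some $k\geq 3$, and the elements $R_\infty^{-1}R_n\in G$ (where $R_\infty$ is the rotation by $2\pi/k$ about $\Gamma_\infty$) converge to the identity while being non-trivial; taking logarithms in a neighbourhood of the identity and normalising produces a non-zero element of the Lie algebra of $G$ that infinitesimally rotates $\Gamma_\infty^\perp$, and together with $R_\infty$ this again generates $SO(2)_{\Gamma_\infty}\subset G$.

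Once the continuous subgroup $SO(2)_{\Gamma_\infty}$ is secured, I would note that an order $\geq 3$ rotation $R_n$ about $\Gamma_n\neq \Gamma_\infty$ cannot preserve $\Gamma_\infty$ setwise (a non-identity rotation preserving a subspace setwise without fixing it pointwise has order $2$), so $R_n(\Gamma_\infty)\neq \Gamma_\infty$, and conjugation shows $SO(2)_{R_n(\Gamma_\infty)}\subset G$. Iterating with the remaining $R_n$ (still infinite in number), $G$ contains rotation subgroups about infinitely many distinct $(d-2)$-flats through the origin, and a Lie-algebraic computation shows that the infinitesimal rotations so generated span all of $\mathfrak{so}(d)$; hence $G\supset SO(d)$. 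Since $SO(d)$ acts transitively on $\Sdmu$, the orbit $\bd K$ of any boundary point is a $(d-1)$-sphere centred at $o$, so $K$ is a ball. The main obstacle is this final Lie-algebraic step: in dimension $3$ it is immediate from the classification of infinite closed subgroups of $SO(3)$ (which was already the tool used for Theorem \ref{brasil}, as only $SO(3)$ and $O(3)$ contain rotations of order $\geq 3$ about more than one axis), but in higher dimensions it requires either an induction on sections in the spirit of the proof of Theorem \ref{dream}, or an appeal to the classification of closed transitive subgroups of $SO(d)$ acting on $\Sdmu$.
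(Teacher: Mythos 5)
This statement is labelled a \emph{conjecture} in the paper: the authors give no proof of it, so there is no argument of theirs to compare yours against, and your proposal has to stand on its own. Read that way, it is a programme rather than a proof. The preparatory steps are sound (the axes of order $\geq 3$ pass through the circumcentre by the argument of Lemma \ref{concurrentes}; the symmetry group $G$ is a closed, hence Lie, subgroup of $O(d)$; a convergent subsequence $\Gamma_n\to\Gamma_\infty$ exists; and when $\phi(\Gamma_n)\to\infty$ one does obtain $SO(2)_{\Gamma_\infty}\subset G$). In dimension $3$ the outline can be completed, since among the infinite closed subgroups of $SO(3)$ only $SO(3)$ itself contains rotations of order $\geq 3$ about infinitely many distinct axes --- essentially the group-theoretic remark the authors themselves make in connection with Theorem \ref{brasil}.

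The genuine gap is the step you yourself call ``the main obstacle'': for $d\geq 4$ it cannot be carried out, because the conjecture as stated is false there. Take $d=4$ and $K=\{x\in\mathbb{E}^4: x_1^2+x_2^2+x_3^2+\tfrac{1}{4}x_4^2\leq 1\}$, a non-spherical ellipsoid of revolution. For every unit vector $u$ in the hyperplane $x_4=0$, the $2$-flat $\Gamma_u=\lin\{u,e_4\}$ is a $(d-2)$-axis of rotation of $K$ of \emph{every} order $k\geq 3$: the rotation about $\Gamma_u$ acts on $\Gamma_u^{\perp}$ as a rotation of the $x_1x_2x_3$-space about the axis $u$ and fixes $x_4$, hence preserves $x_1^2+x_2^2+x_3^2$ and therefore $K$. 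Infinitely many distinct choices of $u$ give a sequence of pairwise distinct axes with $\phi(\Gamma_n)=3$, yet $K$ is not a sphere; the group generated is only $SO(3)\times\{1\}$, and no Lie-algebraic computation will enlarge it to $SO(4)$. Two of your intermediate claims fail for exactly this reason: once $d\geq 4$ a rotation of order $\geq 3$ about $\Gamma_n$ can preserve $\Gamma_\infty$ setwise without fixing it pointwise (take $\Gamma_\infty=\Gamma_n^{\perp}$), and in the eventually-constant-order case the normalised logarithm of $R_\infty^{-1}R_n$ is an element of the form $(\mathrm{Ad}(R_\infty^{-1})-I)X$, which need not generate $SO(2)_{\Gamma_\infty}$. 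Note also that, as literally stated, the conjecture does not require the $\Gamma_n$ to be distinct, so a prism over a regular triangle with the constant sequence $\Gamma_n=\Gamma$ already violates it; your implicit distinctness assumption is surely the intended reading, but even with it the conclusion for $d\geq 4$ can at best be ``$K$ is a generalized body of revolution or a sphere'', in the spirit of Theorems \ref{dream} and \ref{fantasia}, not ``$K$ is a sphere''.
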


\end{document}